\newcommand{\ceil}[1]{\lceil #1 \rceil}
\newcommand{\minimatrix}[4]{\begin{bmatrix} #1 & #2 \\ #3 & #4 \end{bmatrix}  }
\theoremstyle{plain}
\newtheorem{Lemma}{Lemma}
\theoremstyle{definition}
\begin{document}

    \title{On the matrix equation $XA+AX^T =0$}

    \author{Stephan Ramon Garcia}
    \address{   Department of Mathematics\\
            Pomona College\\
            Claremont, California\\
            91711 \\ USA}
    \email{Stephan.Garcia@pomona.edu}
    \urladdr{\url{http://pages.pomona.edu/~sg064747}}

    \author{Amy L. Shoemaker}

    \keywords{Congruence, congruence canonical form, Lie algebra, Pascal's triangle, Pascal matrix.}
    \subjclass[2010]{15A24, 15A21}

    \thanks{Partially supported by National Science Foundation Grant DMS-1001614.}

    \begin{abstract}
	The matrix equation $XA + AX^T = 0$, which has relevance to the study of Lie algebras, was recently studied by 
	De Ter\'an and Dopico (Linear Algebra Appl. {\bf 434}~(2011), 44--67).  They reduced the study of 
	this equation to several special cases and produced explicit solutions in most instances.
	In this note we obtain an explicit solution in one of the difficult cases,
	for which only the dimension of the solution space and an algorithm to find a basis of this space were known previously.
    \end{abstract}

\maketitle

\section{Introduction}

	In the recent article \cite{DTD}, F.~De Ter\'an and F.M.~Dopico considered the equation
	\begin{equation}\label{eq-DTD}
		XA+AX^T = 0,
	\end{equation}
	where $A$ is a given square complex matrix.  
	For each fixed $A$, the solution space to \eqref{eq-DTD} is 
	a Lie algebra $\mathfrak{g}(A)$ equipped with Lie bracket $[X,Y]:=XY - YX$.
	To characterize, up to similarity, all possible Lie algebras $\mathfrak{g}(A)$ which can arise in this
	manner, it turns out that one need only consider 
	those $A$ which are in \emph{congruence canonical form}
	(recall that two matrices $A$ and $B$ are \emph{congruent} if $A = P^T BP$ for some nonsingular $P$) \cite[Sect.~2]{DTD}.

	To be more specific, it is known that each square complex matrix is congruent to a direct sum, 
	uniquely determined up to a permutation of summands, 
	of three types of canonical matrices (see \cite[Thm.~2.1]{HSCM} for complete details).	
	In general, the Lie algebras corresponding to these canonical matrices are far from semisimple and many of them display quite striking
	visual patterns (e.g., \cite[p.~52, p.~54]{DTD}).  
	
	We are concerned here with canonical matrices of Type II.  These are the matrices
	\begin{equation*}
		H_{2n}(\mu):= \minimatrix{0}{I_n}{J_n(\mu)}{0},
	\end{equation*}
	where $I_n$ denotes the $n \times n$ identity matrix and $J_n(\mu)$ is the $n \times n$ Jordan matrix
	whose eigenvalue $\mu$ satisfies $0 \neq \mu \neq (-1)^{n+1}$. 
	De Ter\'an and Dopico were unable to find an explicit description of the Lie algebra $\mathfrak{g}(H_{2n}((-1)^n))$,
	although they computed the dimension of this algebra for even $n$ using an algorithmic approach
	and stated the corresponding results for odd $n$ without proof \cite[Appendix A]{DTD}.

	We aim here to provide an explicit solution to the matrix equation
	\begin{equation}\label{eq-Main}	
		X H_{2n}(\mu)  +  H_{2n}(\mu) X^T=0.
	\end{equation}
	In particular, our method does not significantly depend upon the parity of $n$.	
	Besides providing an explicit description of the Lie algebra $\mathfrak{g}(H_{2n}((-1)^n))$,
	our approach is also notable for its use of a family of matrices derived from Pascal's Triangle.

\section{Solution}

	We begin our consideration of the matrix equation \eqref{eq-Main} by partitioning
	$X$ conformally with the decomposition of $H_{2n}(\mu)$:
	\begin{equation*}
		X = \minimatrix{A}{B}{C}{D}.
	\end{equation*}
	Substituting the preceding into \eqref{eq-Main} yields the system of equations
	\begin{align}
		 B J_n(\mu) +B^T &=0, \label{eq-B} \\
		A+ D^T &= 0,  \label{eq-AD1}\\
		D J_n(\mu)  + J_n(\mu) A^T &=0, \label{eq-AD2}\\
		C+ J_n(\mu)C^T&=0,  \label{eq-C}
	\end{align}
	which together are equivalent to \eqref{eq-Main}.  Before proceeding,
	let us note that solving for $X$ in \eqref{eq-Main}, taking the transpose of the resulting expression,
	and substituting the final result back into \eqref{eq-Main} shows that
	$X$ commutes with the matrix
	\begin{equation}\label{eq-Cosquare}
		H_{2n}(\mu) H^{-T}_{2n}(\mu)  = J_n(\mu)  \oplus  J_n^{-T}(\mu).
	\end{equation}	
	As noted in \cite[Lem.~6]{DTD}, these observations
	are sufficient to handle the case $\mu \neq \pm 1$.  In this case,
	the matrices $J_n^{-T}(\mu)$ and $J_n(\mu)$
	have distinct eigenvalues whence $B = C = 0$ by \cite[Cor.~9.1.2]{GLR}.
	Since $D$ commutes with $J_n(\mu)$ by \eqref{eq-AD1} and \eqref{eq-AD2}, 
	it follows from \cite[Thm.~9.1.1]{GLR} that $D$ is an upper-triangular Toeplitz matrix
	and that $X = (-D^T) \oplus D$.

	Now suppose that $\mu = (-1)^n$.  As before, we see that
	$D$ is an upper-triangular Toeplitz matrix and that $A = -D^T$.  The entire difference
	between our two cases lies in the structure of the off-diagonal blocks $B$ and $C$.

	We first focus our attention upon the matrix $B$.
	Taking the transpose of \eqref{eq-B} and substituting the result back 
	into \eqref{eq-B} yields 
	\begin{equation}\label{eq-CJKMKT}
		B J_n(\mu) =  J_n^{-T}(\mu) B.
	\end{equation}
	Since $\mu = (-1)^n$, it turns out that the Jordan canonical form of $J_n^{-T}(\mu)$ is
	precisely $J_n(\mu)$.  In fact, the similarity is implemented by a special matrix
	derived from Pascal's Triangle.
		
	\begin{Lemma}\label{LemmaSimilar}
		We have 
		\begin{equation}\label{eq-Similarity}
			J_n^{-T}(\mu) = \Delta_n J_n(\mu) \Delta_n^{-1},
		\end{equation}	
		where $\Delta_n$ is the $n \times n$ matrix whose entries are given by
		\begin{equation}\label{eq-DeltaEntries}
			[\Delta_n]_{i,j} 
			 = \mu^i(-\mu)^{j-1} \binom{j-1}{n-i}.
		\end{equation}
		In particular, we observe the convention that $\binom{n}{k}=0$ whenever $k > n$.
		Thus for $n=1,3,5,\ldots$ we have
		\begin{equation*}
			\Delta_1 = [1],\qquad
			\Delta_3 = 
			\begin{bmatrix}
				 0 & 0 & -1 \\
				 0 & 1 & 2 \\
				 -1 & -1 & -1 \\
			\end{bmatrix}
			\qquad
			\Delta_5 = \small
			\begin{bmatrix}
				 0 & 0 & 0 & 0 & -1 \\
				 0 & 0 & 0 & 1 & 4 \\
				 0 & 0 & -1 & -3 & -6 \\
				 0 & 1 & 2 & 3 & 4 \\
				 -1 & -1 & -1 & -1 & -1 \\
			\end{bmatrix},
		\end{equation*}
		and so forth.   For $n=2,4,6,\ldots$ we have
		\begin{equation*}	
			\Delta_2 =	
			\minimatrix{0}{-1}{1}{-1},\quad
			\Delta_4 = 
			\begin{bmatrix}
				 0 & 0 & 0 & -1 \\
				 0 & 0 & 1 & -3 \\
				 0 & -1 & 2 & -3 \\
				 1 & -1 & 1 & -1 \\
			\end{bmatrix},
			\quad
			\Delta_6 = \small
			\begin{bmatrix}
				 0 & 0 & 0 & 0 & 0 & -1 \\
				 0 & 0 & 0 & 0 & 1 & -5 \\
				 0 & 0 & 0 & -1 & 4 & -10 \\
				 0 & 0 & 1 & -3 & 6 & -10 \\
				 0 & -1 & 2 & -3 & 4 & -5 \\
				 1 & -1 & 1 & -1 & 1 & -1 \\
			\end{bmatrix},
		\end{equation*}
		and so on.
	\end{Lemma}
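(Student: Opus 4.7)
The plan is to recast the claimed similarity in the cleaner bilinear form $J_n^T(\mu)\,\Delta_n\,J_n(\mu) = \Delta_n$, which is equivalent to (\ref{eq-Similarity}) once we know $\Delta_n$ is invertible, and to verify this reformulation entrywise using Pascal's rule. For invertibility, the formula (\ref{eq-DeltaEntries}) shows that $[\Delta_n]_{i,j}=0$ whenever $j-1 < n-i$, so $\Delta_n$ is anti-triangular, and on the anti-diagonal $j = n-i+1$ the entries are $(-1)^{n-i}\mu^n \neq 0$, so $\Delta_n$ is invertible.

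For the identity $J_n^T\Delta_n J_n = \Delta_n$, right multiplication by $J_n(\mu)$ shifts columns with scaling, sending $M_{k,j}\mapsto \mu M_{k,j} + M_{k,j-1}$, and left multiplication by $J_n^T(\mu)$ acts analogously on rows. Expanding,
$$[J_n^T(\mu)\Delta_n J_n(\mu)]_{i,j} = \mu^2[\Delta_n]_{i,j} + \mu[\Delta_n]_{i,j-1} + \mu[\Delta_n]_{i-1,j} + [\Delta_n]_{i-1,j-1},$$
where entries with a zero index are taken to vanish, consistently with (\ref{eq-DeltaEntries}) since $\binom{j-1}{n} = 0$ for $j \leq n$. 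Because $\mu = (-1)^n$ forces $\mu^2=1$, the required equality reduces to
$$\mu[\Delta_n]_{i,j-1} + \mu[\Delta_n]_{i-1,j} + [\Delta_n]_{i-1,j-1} = 0.$$
Substituting (\ref{eq-DeltaEntries}) and factoring out $\mu^{i-1}(-\mu)^{j-2}$ collapses this, using $\mu^2=1$ once more, to the binomial relation
$$\binom{j-2}{n-i} - \binom{j-1}{n-i+1} + \binom{j-2}{n-i+1} = 0,$$
which is precisely Pascal's rule.

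The argument is essentially computational, and the single piece of insight is the reformulation $J_n^T\Delta_n J_n = \Delta_n$, which trades the inverse transpose for a symmetric bilinear identity that collapses to Pascal's triangle. The only places to tread lightly are the boundary cases $i=1$ or $j=1$, but these are handled uniformly by the zero-index convention.
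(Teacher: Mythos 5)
Your proof is correct and follows essentially the same route as the paper: both reformulate \eqref{eq-Similarity} as the bilinear identity $J_n^T(\mu)\Delta_n J_n(\mu)=\Delta_n$, expand entrywise using the shift action of $J_n(0)$ and $J_n^T(0)$, invoke $\mu^2=1$, and reduce the result to Pascal's rule. Your explicit check that $\Delta_n$ is invertible (anti-triangular with nonzero anti-diagonal) is a small bonus the paper leaves implicit until its Lemma~\ref{LemmaInverse}.
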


	\begin{proof}[Pf.of Lemma \ref{LemmaSimilar}]
		We first note that \eqref{eq-Similarity} is equivalent to 
		\begin{equation}\label{eq-FTS}
			\Delta_n = J_n^T(\mu) \Delta_n J_n(\mu).
		\end{equation}
		Upon writing 
		\begin{equation*}
			J_n(\mu) = \mu I_n + J_n(0),
		\end{equation*}
		we find that \eqref{eq-FTS} is equivalent to
		\begin{equation}\label{eq-FUS}
			J_n^T(0) \Delta_n J_n(0)+ \mu  \Delta_n J_n(0) + \mu J_n^T(0) \Delta_n =0.
		\end{equation}
		Using Pascal's Rule and the fact that $\mu = (-1)^n$, the $(i,j)$ entry in the right hand side of \eqref{eq-FUS} is
		\begin{align}
			&[\Delta_n]_{i-1,j-1} + \mu [ \Delta_n]_{i,j-1} + \mu [\Delta_n]_{i-1,j} \nonumber\\
			&\quad= \mu^{i-1} (-\mu)^j \binom{j-2}{n-i+1} + \mu^{i-1} (-\mu)^j \binom{j-2}{n-i} + \mu^i (-\mu)^{j-1} \binom{j-1}{n-i+1} \nonumber\\
			&\quad = \mu^{i-1}(-\mu)^j \left[ \binom{j-2}{n-i+1} + \binom{j-2}{n-i} - \binom{j-1}{n-i+1} \right] \label{eq-CRXP}\\
			&\quad = 0.\nonumber
		\end{align}
		For $i+j < n$, we have used the fact that all of the binomial coefficients appearing in \eqref{eq-CRXP} vanish.
	\end{proof}

	\begin{Lemma}\label{LemmaInverse}
		The inverse of $\Delta_n$ is the reflection of $\Delta_n$ with respect to its center.  In other words,
		\begin{equation*}
			[\Delta_n^{-1}]_{i,j} = [\Delta]_{n-i+1,n-j+1}
		\end{equation*}
		for $1 \leq i,j \leq n$. 
	\end{Lemma}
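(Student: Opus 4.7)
The plan is to verify the claim directly, by checking that the candidate inverse $\tilde{\Delta}_n$, defined entrywise by $[\tilde{\Delta}_n]_{j,k} := [\Delta_n]_{n-j+1,n-k+1} = \mu^{n-j+1}(-\mu)^{n-k}\binom{n-k}{j-1}$, satisfies $\Delta_n \tilde{\Delta}_n = I_n$. First I would substitute \eqref{eq-DeltaEntries} into both factors and expand the $(i,k)$-entry of the product. A key initial simplification is that $\mu^{j-1}\mu^{n-j+1} = \mu^n$ has no $j$-dependence, so after collecting signs and reindexing $\ell = j-1$ the entry becomes
\[
(\Delta_n \tilde{\Delta}_n)_{i,k} \;=\; (-1)^{n-k}\mu^{2n+i-k}\sum_{\ell=0}^{n-1}(-1)^\ell\binom{\ell}{n-i}\binom{n-k}{\ell}.
\]

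The heart of the argument, and the main obstacle, is evaluating this binomial sum. My plan is to apply the trinomial revision identity
\[
\binom{n-k}{\ell}\binom{\ell}{n-i} \;=\; \binom{n-k}{n-i}\binom{i-k}{\ell-(n-i)},
\]
which is valid when $i \geq k$; when $i < k$ every summand already vanishes, since a nonzero contribution would require simultaneously $\ell \geq n-i$ and $\ell \leq n-k$, which is impossible. Substituting $m = \ell - (n-i)$ collapses the sum to $(-1)^{n-i}\binom{n-k}{n-i}(1-1)^{i-k}$, which equals $(-1)^{n-i}$ when $i=k$ and vanishes otherwise. Plugging back in, every off-diagonal entry of $\Delta_n \tilde{\Delta}_n$ is zero and each diagonal entry equals $(-1)^{2(n-i)}\mu^{2n} = \mu^{2n}$. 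The proof then concludes with the observation that $\mu = (-1)^n$ forces $\mu^{2n}=1$, so the product is $I_n$ as desired. Once the trinomial revision is in hand, the remainder is pure bookkeeping; all of the difficulty is concentrated in spotting the right way to collapse the double binomial into a finite difference of $1$.
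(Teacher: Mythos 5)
Your proposal is correct and follows essentially the same route as the paper: a direct entrywise verification that the centrally reflected matrix is an inverse, collapsing the double binomial via trinomial revision and then invoking $(1-1)^m=\delta_{m,0}$, with the degenerate case handled by observing that the binomial supports are disjoint. The only (immaterial) difference is that you verify $\Delta_n\widetilde{\Delta}_n=I_n$ where the paper checks $\widetilde{\Delta}_n\Delta_n=I_n$.
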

	
	\begin{proof}[Pf.~of Lemma \ref{LemmaInverse}]
		Let $\widetilde{\Delta}_n$ denote the reflection of $\Delta_n$ with respect to its center.
		First note that $[\widetilde{\Delta}_n \Delta_n]_{i,j} = 0$ holds whenever $i>j$, due to the ``triangular''
		structure of the matrices involved.  On the other hand, if
		$j \geq i$, then it follows that
		\begin{align*}
			[ \widetilde{\Delta}_n \Delta_n]_{i,j}
			&= \sum_{k=1}^n [\widetilde{\Delta}_n]_{i,k} [\Delta]_{k,j}\\
			&= \sum_{k=1}^n [\Delta]_{n-i+1,n-k+1} [\Delta]_{k,j} \\
			&= \sum_{k=1}^n \mu^{n-i+1}(-\mu)^{n-k} \binom{n-k}{i-1}  \mu^k(-\mu)^{j-1} \binom{j-1}{n-k} \\
			&= \mu^{i+j}\sum_{k=1}^n (-1)^{n-k-(j-1)}\binom{n-k}{i-1}   \binom{j-1}{n-k} \\
			&= \mu^{i+j}\sum_{\ell=i-1}^{j-1} (-1)^{\ell-(j-1)}\binom{\ell}{i-1}   \binom{j-1}{\ell} \\
			&= \mu^{i+j}\sum_{\ell=i-1}^{j-1} (-1)^{\ell-(j-1)}\binom{j-1}{i-1}   \binom{j-i}{\ell-i+1} \\
			&= \mu^{i+j} \binom{j-1}{i-1}\sum_{\ell=i-1}^{j-1} (-1)^{\ell-(j-1)}   \binom{j-i}{\ell-i+1} \\
			&= \mu^{i+j} (-1)^{i+j} \binom{j-1}{i-1} \sum_{r=0}^{j-i} (-1)^r    \binom{j-i}{r} \\
			&= \mu^{i+j} (-1)^{i+j} \binom{j-1}{i-1} (1-1)^{j-i}\\
			&= \delta_{i,j}.
		\end{align*}
		Thus $\widetilde{\Delta}_n \Delta_n = I_n$, as claimed.
	\end{proof}

	\begin{Lemma}\label{LemmaIdentity}
		The identity
		\begin{equation}\label{eq-IDS}
			\big(\Delta_n J_n^{\ell}(\mu) \big)^T = - \Delta_n J_n^{n-\ell}(\mu)
		\end{equation}
		holds for $\ell = 0,1,\ldots,n$.  	
	\end{Lemma}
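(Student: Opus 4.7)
The plan is to prove \eqref{eq-IDS} by induction on $\ell \in \{0, 1, \ldots, n\}$, using the relation \eqref{eq-FTS} established in Lemma \ref{LemmaSimilar} as the engine that transports the identity from stage $\ell$ to stage $\ell+1$.

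For the inductive step, I would rewrite \eqref{eq-FTS} as $J_n^T(\mu) \Delta_n = \Delta_n J_n^{-1}(\mu)$, which is legitimate because $\mu \neq 0$ makes $J_n(\mu)$ invertible. Assuming the identity at stage $\ell$, I transpose and compute
$$(\Delta_n J_n^{\ell+1}(\mu))^T = J_n^T(\mu)(\Delta_n J_n^\ell(\mu))^T = -J_n^T(\mu) \Delta_n J_n^{n-\ell}(\mu) = -\Delta_n J_n^{n-\ell-1}(\mu),$$
which is precisely the identity at stage $\ell+1$. So the induction step is essentially free once \eqref{eq-FTS} is in hand.

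The main obstacle is therefore the base case $\ell = 0$, namely $\Delta_n^T = -\Delta_n J_n^n(\mu)$. My approach is to verify this entry by entry, using \eqref{eq-DeltaEntries} together with the standard binomial expansion $[J_n^n(\mu)]_{q,j} = \binom{n}{j-q}\mu^{n-j+q}$ for $q \leq j$. The relations $\mu^2 = 1$ and $\mu^n = \mu$, both immediate from $\mu = (-1)^n$, allow us to consolidate every $\mu$-factor, reducing the claim to the combinatorial identity
$$\sum_{p=0}^{j-1}(-1)^p \binom{p}{n-i}\binom{n}{j-1-p} = (-1)^{n+i}\binom{i-1}{n-j} \qquad (1 \leq i,j \leq n).$$

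I would establish this identity via formal power series. Starting from $\sum_{p \geq 0}(-1)^p \binom{p}{n-i} x^p = (-1)^{n-i} x^{n-i} / (1+x)^{n-i+1}$ and multiplying by $(1+x)^n$, we obtain $(-1)^{n-i} x^{n-i}(1+x)^{i-1}$, whose coefficient of $x^{j-1}$ is $(-1)^{n-i}\binom{i-1}{n-j}$. This matches the right-hand side since $n-i \equiv n+i \pmod 2$, closing the base case and completing the induction.
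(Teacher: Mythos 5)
Your proof is correct and follows essentially the same strategy as the paper's: both arguments reduce the general case to the single identity $\Delta_n^T = -\Delta_n J_n^n(\mu)$ by means of \eqref{eq-FTS}/\eqref{eq-Similarity} (your induction is just the paper's one-line conjugation argument unrolled), and both settle that base identity entrywise via a binomial convolution evaluated with generating functions. The only divergence is that the paper verifies the equivalent form $\Delta_n^{-1}\Delta_n^T = -J_n^n(\mu)$, which invokes Lemma \ref{LemmaInverse} and produces a non-alternating Vandermonde-type sum, whereas you expand $\Delta_n J_n^n(\mu)$ directly from \eqref{eq-DeltaEntries}, bypassing Lemma \ref{LemmaInverse} at the cost of an alternating sum; your generating-function evaluation of that sum and the bookkeeping with $\mu^2=1$, $\mu^n=\mu$ both check out.
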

	
	\begin{proof}[Pf.~of Lemma \ref{LemmaIdentity}]
		We first claim that it suffices to prove that 
		\begin{equation}\label{eq-Basic}
			\Delta_n^T = -\Delta_n J_n^n(\mu)
		\end{equation}
		holds.  Indeed, using \eqref{eq-Similarity} and \eqref{eq-Basic} it follows that
		\begin{align*}
			\Delta_n^T 
			&= -\Delta_n J_n^n(\mu) \\
			&= -(\Delta_n J_n^{\ell}(\mu)\Delta_n^{-1}) \Delta_n J_n^{n-\ell}(\mu) \\
			&= -\big(J_n^{\ell}(\mu)\big)^{-T} \Delta_n J_n^{n-\ell}(\mu),
		\end{align*}
		which implies \eqref{eq-IDS}.
	
		In order to prove \eqref{eq-Basic},
		it suffices to derive the equivalent identity
		\begin{equation*}
			\Delta^{-1}_n \Delta_n^T = - J_n^n(\mu).
		\end{equation*}
		Computing the $(i,j)$ entry in the matrix product $\Delta^{-1}_n \Delta_n^T$, we see that
		\begin{align}
			[\Delta_n^{-1} \Delta_n^T]_{i,j}
			&= \sum_{k=1}^n [ \Delta_n^{-1}]_{i,k} [\Delta_n^T]_{k,j}  \nonumber\\
			&= \sum_{k=1}^n [ \Delta_n]_{{n-i+1},{n-k+1}} [\Delta_n]_{j,k}  \nonumber\\
			&= \sum_{k=1}^n \mu^{n-i+1}(-\mu)^{n-k} \binom{n-k}{i-1} \mu^j (-\mu)^{k-1} \binom{k-1}{n-j}  \nonumber\\
			&= -\mu^{i+j+1}\sum_{k=1}^n \binom{n-k}{i-1} \binom{k-1}{n-j}  \nonumber\\
			&=- \mu^{i+j+1} \sum_{\ell=0}^{n-1} \binom{n-\ell-1}{i-1} \binom{\ell}{n-j}  \label{eq-FR1}\\
			&=- \mu^{i+j+1} \binom{n}{n+i-j} \label{eq-FR2}\\
			&=- \mu^{i+j+1} \binom{n}{j-i} \nonumber\\
			&=-[J_n^n(\mu)]_{i,j}, \nonumber
		\end{align}
		where the passage from \eqref{eq-FR1} to \eqref{eq-FR2}
		follows by computing the coefficient of $x^{n-1}$ in the identity
		\begin{equation*}
			\frac{x^{i-1}}{(1-x)^i} \cdot \frac{ x^{n-j} }{ (1-x)^{n-j+1} } =
			\frac{x^{n+i-j-1}}{ (1-x)^{n+i-j+1}}.\qedhere
		\end{equation*}
	\end{proof}

	Having digressed somewhat upon the properties of the matrix $\Delta_n$, let us return now 
	to the equation \eqref{eq-CJKMKT}.  Upon substituting
	\eqref{eq-Similarity} into \eqref{eq-CJKMKT}, we find that
	\begin{equation*}
		(\Delta_n^{-1}B)J_n(\mu) = J_n(\mu)(\Delta_n^{-1}B),
	\end{equation*}
	from which it follows that $B = \Delta_n U$ for some upper triangular 
	Toeplitz matrix $U$ \cite[Thm.~9.1.1]{GLR}.  However, it is important to note that 
	not every such $U$ leads to a solution of the original equation \eqref{eq-B}.
		
	Now observe that the matrices $I_n, J_n(\mu),\ldots, J_n^{n-1}(\mu)$ form a basis for the space
	of $n \times n$ upper triangular Toeplitz matrices.  To see this, simply note that $J_n^k(\mu)$ contains 
	nonzero entries on the $k$th superdiagonal whereas the $k$th superdiagonals of the matrices 
	$I_n,J_n(\mu), J_n^2(\mu),\ldots, J_n^{k-1}(\mu)$ are identically zero.
	Since $U$ is an upper triangular Toeplitz matrix, we may write
	\begin{equation}\label{eq-UTJ}
		U = \sum_{j=0}^{n-1} \alpha_{j} J_n^{j}(\mu)
	\end{equation}
	for some constants $\alpha_0,\alpha_1,\ldots,\alpha_{n-1}$.

	Substituting $B = \Delta_n U$ into \eqref{eq-B} yields
	\begin{equation}\label{eq-DDKFU}
		\Delta_n U = - \big(\Delta_nU\big)^T J_n^{-1}(\mu),
	\end{equation}
	which, in light of \eqref{eq-UTJ}, provides us with
	\begin{equation*}
		\sum_{j=0}^{n-1} \alpha_{j} \Delta_n J_n^{j}(\mu)
		= -\sum_{k=0}^{n-1} \alpha_{k} \big( \Delta_n J_n^{k}(\mu) \big)^T J_n^{-1}(\mu).
	\end{equation*}
	Using the identity \eqref{eq-IDS} in the preceding formula, we obtain
	\begin{equation*}
		\sum_{j=0}^{n-1} \alpha_{j}  J_n^{j}(\mu)
		=  \sum_{k=0}^{n-1} \alpha_{k} J_n^{n-k-1}(\mu),
	\end{equation*}
	which can be rewritten as
	\begin{equation*}	
		\sum_{\ell=0}^{n-1} (\alpha_{\ell} - \alpha_{n-1-\ell})  J_n^{\ell}(\mu) = 0.
	\end{equation*}
	Since the matrices $I_n,J_n(\mu), J_n^2(\mu),\ldots, J_n^{n-1}(\mu)$ are linearly independent, it follows that
	the matrix \eqref{eq-UTJ} is determined by exactly $\ceil{\frac{n}{2}}$ free parameters.
	Thus the general solution to \eqref{eq-B} is
	\begin{equation*}
		\boxed{ B = \Delta_n \!\!\!\sum_{\ell=0}^{ \ceil{ \frac{n}{2}}-1} 
		\!\!\alpha_{\ell} \left[ J_n^{\ell}\big( (-1)^n\big) + J_n^{n-1-\ell}\big((-1)^n\big) \right]. }
	\end{equation*}
	
	For $n = 3$, the general form of the matrix $B$ is
	\begin{equation*}
		\begin{bmatrix}
			 0 & 0 & 2 \left(\alpha _1-\alpha _0\right) \\
			 0 & 2 \left(\alpha _0-\alpha _1\right) & 2 \left(\alpha _0-\alpha _1\right) \\
			2 \left(\alpha _1-\alpha _0\right) & 0 & -\alpha _0
		\end{bmatrix}
		=
		\begin{bmatrix}
			 0 & 0 & -\beta _1 \\
			 0 & \beta _1 & \beta _1 \\
			 -\beta _1 & 0 & -\beta _0
		\end{bmatrix},
	\end{equation*}
	where $\beta_0 = \alpha_0$ and $\beta_1 = 2(\alpha_0 - \alpha_1)$.  For $n = 4$ we have	
	\begin{equation*}
		\begin{bmatrix}
			 0 & 0 & 0 & -2 \alpha _0-2 \alpha _1 \\
			 0 & 0 & 2 \alpha _0+2 \alpha _1 & -3 \alpha _0-3 \alpha _1 \\
			 0 & -2 \alpha _0-2 \alpha _1 & \alpha _0+\alpha _1 & -3 \alpha _0-\alpha _1 \\
			 2 \alpha _0+2 \alpha _1 & \alpha _0+\alpha _1 & 2 \alpha _0 & -\alpha _0
		\end{bmatrix},
	\end{equation*}
	and for $n = 5$ we obtain
	\begin{equation*}\footnotesize
		\begin{bmatrix}
			 0 & 0 & 0 & 0 & -2 \alpha _0+2 \alpha _1-2 \alpha _2 \\
			 0 & 0 & 0 & 2 \alpha _0-2 \alpha _1+2 \alpha _2 & 4 \alpha _0-4 \alpha _1+4 \alpha _2 \\
			 0 & 0 & -2 \alpha _0+2 \alpha _1-2 \alpha _2 & -2 \alpha _0+2 \alpha _1-2 \alpha _2 & -6 \alpha _0+3 \alpha _1-2 \alpha _2 \\
			 0 & 2 \alpha _0-2 \alpha _1+2 \alpha _2 & 0 & 4 \alpha _0-\alpha _1 & 4 \alpha _0-\alpha _1 \\
			 -2 \alpha _0+2 \alpha _1-2 \alpha _2 & 2 \alpha _0-2 \alpha _1+2 \alpha _2 & -4 \alpha _0+\alpha _1 & 0 & -\alpha _0
		\end{bmatrix}.
	\end{equation*}
	It is typographically impractical to depict solutions for $n \geq 6$

	To complete our solution to the original matrix equation \eqref{eq-Main}, we follow \cite[p.~55]{DTD} 
	in noting that $B$ satisfies \eqref{eq-B} 
	if and only if the matrix $C = RBR$, where $R$ denotes
	the reversed identity matrix, satisfies the corresponding equation \eqref{eq-C}.	
	Thus the dimension of the solution space to our original equation
	\eqref{eq-Main} is $n + 2 \ceil{ \frac{n}{2} }$ whenever $\mu = (-1)^n$.

\smallskip
\noindent\textbf{Acknowledgments}:  We wish to thank the anonymous referee, who provided a large number of
helpful comments and suggestions.  We also thank Andrii Dmytryshyn for pointing out the references
\cite{Dmytryshyn} and \cite{Dmytryshyn2} to us.

\bibliographystyle{amsplain}
\bibliography{OMEXAAX}

\end{document}